\documentclass{amsart}

\usepackage{todonotes}

\usepackage{amssymb, enumerate}
\usepackage{amsrefs}

\theoremstyle{plain}
\newtheorem{theorem}{Theorem}
\newtheorem{lemma}[theorem]{Lemma}

\newtheorem{prop}[theorem]{Proposition}

\newtheorem{characterization}[theorem]{Characterization}

\theoremstyle{definition}
\newtheorem{definition}[theorem]{Definition}

\theoremstyle{remark}
\newtheorem{remark}[theorem]{Remark}
\newtheorem{question}{Question}

\newcommand{\lp}{\left(}

\newcommand{\rp}{\right)}
\newcommand{\e}{\epsilon}
\newcommand{\R}{\mathbb{R}}

\newcommand{\Z}{\mathbb{Z}}
\newcommand{\E}{\mathbb{E}}
\newcommand{\G}{\mathbb{G}}

\newcommand{\N}{\mathbb{N}}
\newcommand{\cl}{\overline}
\newcommand{\ti}{\textit}
\renewcommand{\H}{\mathcal H}

\DeclareMathOperator{\CAT}{\textup{\text{CAT}}}

\DeclareMathOperator{\Isom}{\textup{\text{Isom}}}

\numberwithin{equation}{section}
\numberwithin{theorem}{section}

\begin{document}

\title{A metric characterization of snowflakes of Euclidean spaces}
\author{Kyle Kinneberg and Enrico Le Donne}

\date{\today}

\begin{abstract} 
We give a metric
characterization of snowflakes of Euclidean spaces.
Namely,
a metric space  is isometric to  
$\R^n$ equipped with a distance $(d_{\rm E})^\epsilon$, for some 
$n\in \N_0$ and $\epsilon\in (0,1]$, where $d_{\rm E}$ is the Euclidean distance,
 if and only if it    is locally compact,
   $2$-point isometrically homogeneous, and 
   admits dilations of any factor.
    \end{abstract}
    \maketitle
    
\section{Introduction}

Carnot groups are rich mathematical objects that possess intriguing geometric, algebraic, and analytic structure. Somewhat surprising, then, is the fact that one can describe Carnot groups from a standard metric point of view. Following ideas of Montgomery, Zippin, Mitchell, and Berestovski{\u\i}, in \cite{LD13}, the second-named author gave a strictly metric characterization of \ti{sub-Finsler} Carnot groups as those metric spaces that are locally compact, geodesic, isometrically homogeneous, and self-similar.

\begin{characterization}
\label{LD}
A metric space $(X,d)$ is isometric to a sub-Finsler Carnot group if and only if it has the following properties.
\begin{itemize}
\item[(\ref{LD}.i)]  $X$ is locally compact.
\item[(\ref{LD}.ii)]  $X$ is geodesic.
\item[(\ref{LD}.iii)]  $\Isom(X)$ is transitive on $X$.
\item[(\ref{LD}.iv)]  There is $\lambda >1$ and a homeomorphism $f_{\lambda} \colon X \rightarrow X$ such that \\
$d(f_{\lambda}(x),f_{\lambda}(y)) = \lambda d(x,y)$ for all $x,y \in X$.
\end{itemize}
\end{characterization}

Characterization \ref{LD} was demonstrated in \cite{LD13}. An analogous statement can be found in Berestovski{\u\i}'s doctoral thesis and in \cite[Example 3.2]{Bere}.

Each of these four properties is necessary to guarantee that $X$ is a Carnot group. For example, the space $(\R^n, |\cdot|^{1/2})$ is locally compact, homogeneous, and admits dilations. In fact, if $\G$ is a Carnot group with metric $d$, and if $0<\e<1$, then its \ti{snowflake} $(\G,d^{\e})$ has properties (\ref{LD}.i), (\ref{LD}.iii), and (\ref{LD}.iv), but fails to be geodesic. Indeed, $(\G,d^{\e})$ admits no non-constant rectifiable paths.

The natural question arises: are snowflakes of Carnot groups the only metric spaces which satisfy (\ref{LD}.i), (\ref{LD}.iii), and (\ref{LD}.iv)? The answer is negative, as can be seen by the following example. On $\R^2$, define the metric 
$$d(x,x') = |x_1-x_1'| + |x_2-x_2'|^{1/2};$$
here, the dilation $f_{\lambda}$ is $f_{\lambda}(x) = (\lambda x_1, \lambda^2 x_2)$. Similar examples can be constructed on any Carnot group that decomposes as a nontrivial product of Carnot groups: simply snowflake each factor independently. Moreover, a metric space satisfying (\ref{LD}.i), (\ref{LD}.iii), and (\ref{LD}.iv) may fail to be connected (see Section \ref{ex:Cantor}). We therefore must modify our question as follows.

\begin{question}
If a connected and locally connected metric space satisfies (\ref{LD}.i), (\ref{LD}.iii), and (\ref{LD}.iv), is it necessarily isometric to a product of snowflakes of Carnot groups?
\end{question}

We remark that by a result of Gleason, Montgomery, and Zippin each metric space in Question 1 has the structure of a manifold; see, for example, the argument in \cite{LD13}. In this paper, we work toward an answer to the question above by giving a metric characterization of snowflakes of the Euclidean spaces $\mathbb E^n$.

\begin{theorem} \label{main}
A metric space $(X,d)$ is isometric to a snowflake of some Euclidean space
$\mathbb E^n$,
$n\in \N\cup\{0\}$,
 if and only if it has the following properties.
\begin{itemize}
\item[(\ref{main}.i)] $X$ is locally compact.
\item[(\ref{main}.ii)]  $X$  is $2$-point isometrically homogeneous: for all $x_1, x_2,y_1, y_2 \in X$ such that
$d(x_1, x_2 )=d(y_1,y_2)$, there exists $f \in \Isom(X)$ such that $f(x_i)=y_i$ for $i=1,2$.
\item[(\ref{main}.iii)] $X$ admits dilations of any factor: for all $\lambda >0$, there is a homeomorphism $f_{\lambda} \colon X \rightarrow X$ such that $d(f_{\lambda}(x),f_{\lambda}(y)) = \lambda d(x,y)$ for all $x,y \in X$.
\end{itemize}
\end{theorem}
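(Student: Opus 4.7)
The forward direction is routine: $(\R^n, d_E^\epsilon)$ is locally compact, 2-point homogeneity transfers under the snowflake operation since $d_E^\epsilon(x_1,x_2) = d_E^\epsilon(y_1,y_2)$ iff $d_E(x_1,x_2) = d_E(y_1,y_2)$, and the Euclidean map $x \mapsto \lambda^{1/\epsilon}x$ realizes the dilation of factor $\lambda$. My plan for the converse is to un-snowflake the metric so as to reduce to Characterization \ref{LD}, and then use 2-point homogeneity to rule out everything except the Euclidean case among sub-Finsler Carnot groups.

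I will first set up the basic structure. Hypothesis (\ref{main}.ii) implies transitivity of $\Isom(X)$, so (\ref{main}.i) together with the Gleason--Montgomery--Zippin apparatus used in \cite{LD13} gives that $\Isom(X)$ is a Lie group, $X$ (or each connected component, on which I will focus) is a manifold, and point stabilizers are compact; in particular $X$ is proper. To identify the snowflake exponent, for each $x \ne y$ in $X$ I consider
\[
c(x,y) := \inf\bigl\{\, d(x,z)/d(x,y) : z\in X,\ d(x,z)=d(z,y)\,\bigr\}.
\]
Equidistant points $z$ exist by the intermediate value theorem applied to $z \mapsto d(x,z)-d(y,z)$ along a path from $x$ to $y$, and the infimum is attained by properness. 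Hypothesis (\ref{main}.ii) forces $c(x,y)$ to depend only on $d(x,y)$, while (\ref{main}.iii) forces scale invariance; hence $c$ is a constant $c_0 \in [1/2, 1)$, with $c_0 < 1$ following from the existence of dilations of arbitrarily large factor. Setting $\epsilon := -\log_2 c_0 \in (0,1]$ and $d_0 := d^{1/\epsilon}$, the minimizer $m$ of the infimum is a $d_0$-midpoint of $x$ and $y$. Iterating this midpoint construction dyadically with the completeness of $X$ (from local compactness plus homogeneity) produces continuous $d_0$-geodesics between any pair of points; a length-metric argument then shows that $d_0$ is a genuine geodesic metric.

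Now I apply Characterization \ref{LD} to $(X, d_0)$: it is locally compact, geodesic (by the previous step), isometrically homogeneous (the isometries of $d$ and $d_0$ coincide), and admits dilations (each $f_\lambda$ for $d$ is a $d_0$-dilation of factor $\lambda^{1/\epsilon}$). Thus $(X, d_0)$ is a sub-Finsler Carnot group. Moreover, 2-point homogeneity is preserved under $d \leftrightarrow d_0$. Any isometry of a Carnot group fixing the identity preserves its Lie algebra grading $V_1 \oplus \cdots \oplus V_s$; if $s \ge 2$, such an isometry cannot move a first-layer direction to a direction with non-trivial higher-layer part, contradicting transitivity of the stabilizer on the unit sphere (which contains points of both kinds). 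Hence $s = 1$, meaning $(X, d_0) = (\R^n, \|\cdot\|)$ for some norm. The compact stabilizer of $0$ is then a subgroup of $\GL(n,\R)$ preserving $\|\cdot\|$ and acting transitively on its unit sphere; by the classical classification of compact groups acting transitively on spheres (Montgomery--Samelson, Borel) together with an averaging argument, any norm with this invariance must be Euclidean. Therefore $(X,d) = (\mathbb{E}^n, d_E^\epsilon)$.

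The main obstacle is the un-snowflaking step: rigorously showing that the infimum defining $c(x,y)$ is attained and yields a constant $c_0 \in [1/2, 1)$, that $d^{1/\epsilon}$ satisfies the triangle inequality globally (not just the weaker midpoint condition), and that it is in fact geodesic. This hinges on a delicate interplay of (\ref{main}.i), (\ref{main}.ii), and (\ref{main}.iii): 2-point homogeneity makes the midpoint ratio well-defined, local compactness provides the existence of minimizers and completeness for the dyadic construction, and the plenitude of dilations gives scale invariance and prevents $c_0$ from degenerating to $1$.
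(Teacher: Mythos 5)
There is a genuine gap, and it sits exactly where you yourself flag ``the main obstacle'': the de-snowflaking step is asserted rather than proved, and the route you sketch for it does not work as stated. First, the existence of equidistant points via the intermediate value theorem applied to $z \mapsto d(x,z)-d(y,z)$ ``along a path from $x$ to $y$'' presupposes path-connectivity. That is not available a priori: a locally compact, isometrically homogeneous, self-similar space can be totally disconnected (see Section \ref{ex:Cantor}), and the Gleason--Montgomery--Zippin manifold structure you invoke requires connectivity and local connectivity as hypotheses (this is why the paper states it only for Question 1, not in the proof of Theorem \ref{main}). Restricting to a connected component does not help, since the components could a priori be points. Second, even granting an attained midpoint ratio $c_0$, your claim that $c_0<1$ ``follows from the existence of dilations of arbitrarily large factor'' is not an argument; if $c_0=1$ your exponent $\epsilon$ degenerates and $d^{1/\epsilon}$ is undefined. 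Third, and most seriously, knowing that every pair has a $d_0$-midpoint at ratio $1/2$ does not yield that $d_0=d^{1/\epsilon}$ satisfies the triangle inequality (raising a metric to a power $>1$ generically destroys it), and without the triangle inequality the dyadic midpoint iteration does not control distances between non-consecutive dyadic points, so it does not produce a geodesic. Establishing that some exponent $p>1$ turns $d^p$ into a genuine metric with midpoints is the entire content of the paper's Lemmas \ref{TyWu}, \ref{smallchain}, and \ref{existsp}: one shows $(X,d)$ is uniformly non-convex (using the between-point dichotomy and properness), invokes the Tyson--Wu theorem (Proposition \ref{nonconvex}) to get a quasi-triangle inequality $d(x_0,x_n)^p \leq L\sum d(x_i,x_{i-1})^p$, introduces the gauge $\phi(p)=\inf_x\lp d(a,x)^p+d(x,b)^p\rp$, shows $\{\phi=1\}$ is closed and bounded (boundedness via the doubling property and Lemma \ref{HDim1}), and takes the \emph{maximal} $p$ with $\phi(p)=1$, deriving a contradiction from arbitrarily short chains if $(X,d^p)$ still had no between-point. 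None of this machinery, or a substitute for it, appears in your proposal.

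The remaining parts of your outline are sound and essentially coincide with the paper: the forward direction is routine; once a geodesic metric $d^p$ with properties (\ref{main}.i)--(\ref{main}.iii) is in hand, Characterization \ref{LD} gives a sub-Finsler Carnot group, the affineness of Carnot isometries \cite{LDO12} plus transitivity of the stabilizer on the unit sphere forces a single stratum, and an invariant-inner-product (averaging) argument identifies the norm as Euclidean. But as written, the core analytic step --- producing the exponent and verifying that the de-snowflaked distance is a geodesic metric --- is missing.
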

 
Recognizing snowflakes of Euclidean spaces, or more generally, of Carnot groups, is important for many rigidity problems in hyperbolic geometry. In particular, metric characterizations of these spaces can give corresponding uniformization results in the coarse hyperbolic category (e.g., for $\CAT(-1)$-spaces or for Gromov hyperbolic metric spaces) by looking at the visual boundary of the hyperbolic object. More specifically, this boundary is equipped with a class of visual metrics, any two of which are ``snowflake-equivalent" to each other. The classical hyperbolic spaces---non-compact, rank one symmetric spaces---have Carnot groups as their visual boundaries.

Interesting questions also arise from dynamical considerations. For example, if a Gromov hyperbolic metric space, $Y$, admits a geometric group action (i.e., a properly discontinuous, cocompact action by isometries), then this action passes naturally to an action on the visual boundary, $\partial Y$, by homeomorphisms. In fact, the induced action will include ``quasi-dilations" of any factor, at least locally (cf. \cite[Lemma 3.1]{Kin14} for a particular manifestation of this principle). Furthermore, the diagonal action on $\partial Y \times \partial Y$ is topologically transitive (cf. \cite[Theorem 6.3.6]{Nic89} in the classical setting, or use the ``north-south" dynamics described in \cite[Section 4]{KB02}).

Of course, the hypotheses we impose in Theorem \ref{main} are much stronger than the properties that are generally found in the boundaries of hyperbolic metric spaces. The motivation, however, should be clear. Let us mention a specific problem from this setting that has connections to the methods we use here.

Let $M$ be a complete, simply connected, Riemannian manifold with sectional curvature $\leq -1$ and which admits a geometric group action. Let $X$ denote its visual boundary, equipped with the canonical visual metric. Is it true that if $X$ contains a non-constant curve of finite length (or even stronger, if any two points in $X$ can be joined by a curve of finite length), then $M$ is symmetric? For a partial positive result in this direction see \cite[Section 4]{Con03}, where it is shown that the conclusion holds if there are ``many" rectifiable curves joining any two points in $X$.

\subsection*{Acknowledgement}
This work was initiated when the first-named author visited the
University of Jyv\"askyl\"a. Both authors would like to thank the
University of Jyv\"askyl\"a for excellent working conditions and thank
John Mackay and Gareth Speight  for improving feedback.

\section{Preliminaries}

Let us begin by establishing some easy consequences of properties (\ref{main}.ii) and (\ref{main}.iii). In the first place, we have
\\
\begin{itemize}
\item[(\ref{main}.iv)]  $\Isom(X)$ is transitive on $X$.\\
\end{itemize} 
Indeed, it is enough to take $x_1= x_2$ and $y_1= y_2$ in (\ref{main}.ii). Additionally, we have
\\
\begin{itemize}
\item[(\ref{main}.v)]  $X$ admits dilations of any factor fixing any base point: for all $x_0\in X$ and all $\lambda >0$, there is a homeomorphism $f_\lambda \colon X \rightarrow X$ for which $f_\lambda (x_0)=x_0$ and  $d(f_\lambda(x),f_\lambda(y)) = \lambda d(x,y)$ for all $x,y \in X$.\\
\end{itemize}
Indeed, one can compose a dilation given by (\ref{main}.iii) with an isometry from (\ref{main}.iv) in order to fix $x_0$. It will also be useful to observe that properties (\ref{main}.ii) and (\ref{main}.iii) can be unified into a single assumption. Namely,
\\
\begin{itemize}
\item[(\ref{main}.vi)] If $x,y,x',y' \in X$, with $x \neq y$ and $x' \neq y'$, 
then there is a homeomorphism $f \colon X \rightarrow X$ with $f(x)=x'$ and $f(y)=y'$ 
such that $d(f(u),f(v)) = \lambda d(u,v)$ for all $u,v \in X$, where $\lambda = \frac{d(x',y')}{d(x,y)}$.\\
\end{itemize}
Indeed, by (\ref{main}.v) we have a homeomorphism $f_1$ (respectively, $f_2$) fixing $x$ (respectively, $x'$) and scaling the distance by $\lambda_1 = 1/d(x,y)$ (respectively, $\lambda_2 = 1/d(x',y')$). Let $z = f_1(y)$ and $z' = f_2(y')$ so that $d(x,z)=d(x',z')$. Then by (\ref{main}.ii), 
there is $f_3 \in \Isom(X)$ for which $f_3(x)=x'$ and $f_3(z) = z'$. The desired homeomorphism $f$ is then $f_2^{-1} \circ f_3 \circ f_1$.

Unless $X$ consists of a single point (or is the empty set), we claim that any metric space that admits dilations of any factor has Hausdorff dimension at least $1$. We remark, though, that the dilations $f_\lambda$ are not assumed to be continuous in $\lambda$. Hence, one cannot immediately deduce that the metric space contains a non-constant curve.

\begin{lemma}\label{HDim1}
A metric space containing at least two points and with property (\ref{main}.v) has infinite Hausdorff $1$-measure, and hence Hausdorff dimension at least $1$. 
\end{lemma}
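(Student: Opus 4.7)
The plan is to produce a $1$-Lipschitz map from $X$ onto an unbounded half-line, and then exploit the fact that $1$-Lipschitz maps do not increase Hausdorff $1$-measure.

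First I would fix a base point $x_0\in X$ and, using that $X$ has at least two points, choose some $y\in X\setminus\{x_0\}$. By property (\ref{main}.v), for every $\lambda>0$ there is a dilation $f_\lambda\colon X\to X$ fixing $x_0$ and scaling distances by the factor $\lambda/d(x_0,y)$. Setting $y_\lambda:=f_\lambda(y)$, one gets $d(x_0,y_\lambda)=\lambda$. Thus the distance function
\[
\phi\colon X\longrightarrow[0,\infty),\qquad \phi(x):=d(x_0,x),
\]
takes every positive real value; in particular, the image $\phi(X)$ contains the whole ray $(0,\infty)$.

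Next, since the reverse triangle inequality gives $|\phi(x)-\phi(x')|\leq d(x,x')$, the map $\phi$ is $1$-Lipschitz. It is a standard property of Hausdorff measure that a $1$-Lipschitz map does not increase $\mathcal H^1$, so
\[
\mathcal H^1(X)\;\geq\;\mathcal H^1(\phi(X))\;\geq\;\mathcal H^1\bigl((0,\infty)\bigr)\;=\;\infty.
\]
Infinite Hausdorff $1$-measure of course forces the Hausdorff dimension to be at least $1$.

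I do not expect any real obstacle here: the only subtlety is to notice that one does not need continuity of $\lambda\mapsto f_\lambda$ (which is explicitly \emph{not} assumed) nor the existence of any curves in $X$. The construction simply produces, for each $\lambda>0$, one witness point $y_\lambda$ at distance $\lambda$ from $x_0$, and this discrete information is already enough for the $1$-Lipschitz projection argument to go through.
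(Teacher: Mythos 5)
Your argument is correct and is essentially identical to the paper's: both fix a base point, use the dilations from (\ref{main}.v) to show that every sphere $S(x_0,r)$ is nonempty so that the $1$-Lipschitz distance function $x\mapsto d(x_0,x)$ has image of infinite $\mathcal H^1$-measure, and then conclude via the fact that $1$-Lipschitz maps do not increase Hausdorff measure. Nothing further is needed.
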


\begin{proof}
Apart from the base point $x_0$ we assume that there exists another point $x_1\in X$. Say $\bar r:=d(x_0,x_1) >0$. 
By property (\ref{main}.v) the point $f_\lambda(x_1) $ has distance $\lambda \bar r$ from $x_0$. We deduce that for all $r>0$ the metric sphere
$$S(x_0, r):=\{x\in X \colon d(x_0,x)=r\}$$
is not empty. Thus, the $1$-Lipschitz map $\rho \colon \R \rightarrow [0,\infty)$ defined by $\rho(x) = d(x_0,x)$ is surjective. In particular, its image has infinite Hausdorff $1$-measure. As is well-known, $1$-Lipschitz functions cannot increase Hausdorff measure (e.g., \cite[Theorem 7.5]{Mat95}), so we conclude that $\H^1(X)=\infty$.
\end{proof}

\begin{remark}[Completeness] \label{completeness}
For later uses, we point out that a metric space with properties
(\ref{main}.i) and
(\ref{main}.iv) is necessarily complete.
Indeed, if the space is locally compact then one can consider  a point $\bar x\in X$ and take a small-enough radius $r >0$ so that the ball $B(\bar x, r)$ is precompact, i.e., its closure is compact.
By isometric homogeneity, any other point $x\in X$ has $B( x, r)$ precompact.
If, now, $(x_n)$ is a  Cauchy sequence in $X$, then eventually $d(x_n,x_m)<r$. Thus, the sequence is eventually in a compact set, so it has an accumulation point; being Cauchy, this implies that the sequence converges.

If, in addition, the metric space has property (\ref{main}.v), then a similar argument shows that every closed ball is compact, i.e., the space is \ti{proper}.
\end{remark}

If $(X,d)$ is a metric space and $0 < \e < 1$, the metric space $(X,d^{\e})$ is called a snowflake of $(X,d)$. This terminology is motivated by the von Koch snowflake, which is bi-Lipschitz equivalent to a snowflake of the segment $[0,1]$ when equipped with the metric induced from the plane. Similarly, we say that a metric space $(X,d)$ can be \ti{de-snowflaked} if there is $p > 1$ such that $(X,d^p)$ is a metric space. In this case, the original metric $d$ is a snowflake of $(X,d^p)$, using $\e = 1/p$.

In many important situations, it makes sense to study snowflakes of metric spaces up to bi-Lipschitz equivalence. For example, given any two metrics in the class of visual metrics on the boundary of a Gromov hyperbolic metric space, they are either bi-Lipschitz equivalent or one is a snowflake of the other, up to bi-Lipschitz equivalence. J.~Tyson and J.-M.~Wu \cite{TyWu05} have given a nice characterization of metric spaces that are bi-Lipschitz equivalent to a snowflake. One of the conditions they introduce is a metric notion of uniform non-convexity (which is motivated by the construction of the von Koch snowflake). 

If $(X,d)$ is a metric space, $x,y \in X$, and $0 < \lambda < 1$ and $\delta > 0$ are constants, let
$$L(x,y; \lambda, \delta) := \cl{B}(x,(\lambda+\delta)d(x,y)) \cap \cl{B}(y,(1-\lambda+\delta)d(x,y))$$
be the corresponding ``lens-shaped" set. We say that $(X,d)$ is \ti{uniformly non-convex} if there is $0 < \delta < 1/2$ such that, for each pair of points $x,y \in X$, there is a value $\lambda \in (\delta, 1-\delta)$ for which $L(x,y;\lambda,\delta) = \emptyset$. 

\begin{prop}[{\cite[Theorem 1.5]{TyWu05}} and ensuing discussion] \label{nonconvex}
If a metric space $(X,d)$ is uniformly non-convex, then it is bi-Lipschitz equivalent to a snowflake.
\end{prop}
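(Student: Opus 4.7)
My aim is to find an exponent $p > 1$ and a genuine metric $\tilde d$ on $X$ that is bi-Lipschitz equivalent to $d^p$; then $d$ itself is bi-Lipschitz equivalent to the snowflake $\tilde d^{1/p}$, which is the conclusion. The natural candidate is the Frink-style chain infimum
$$\tilde d(x, y) := \inf \left\{ \sum_{i=0}^{n-1} d(x_i, x_{i+1})^p : x_0 = x, \, x_n = y,\, n \geq 1 \right\},$$
which is automatically a pseudo-metric bounded above by $d^p$ (take the trivial one-step chain). The heart of the argument is the reverse estimate $\tilde d(x, y) \geq c \, d(x, y)^p$ for some constant $c > 0$ uniform in $(x, y)$; this both forces $\tilde d$ to be positive definite and certifies the bi-Lipschitz equivalence.

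The key single-scale ingredient is a direct translation of the empty-lens hypothesis. For each $(x, y)$ with $x \neq y$, choose $\lambda = \lambda(x, y) \in (\delta, 1-\delta)$ with $L(x, y; \lambda, \delta) = \emptyset$. Then every $z \in X$ satisfies
$$d(x, z) > (\lambda+\delta) d(x,y) \quad \text{or} \quad d(z, y) > (1-\lambda+\delta) d(x,y).$$
Combined with the triangle inequality $d(x, z) + d(z, y) \geq d(x, y)$, a direct optimization on the constrained pair $(d(x,z)/d(x,y),\, d(z,y)/d(x,y))$ yields
$$d(x, z)^p + d(z, y)^p \geq \beta_p(\delta) \cdot d(x, y)^p$$
for an explicit constant $\beta_p(\delta)$ with $\beta_1(\delta) = 1$; for $p$ slightly larger than $1$, $\beta_p(\delta)$ is slightly less than $1$.

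To promote this to a bound on arbitrary chains, I would argue by induction on chain length. Given $x_0, \ldots, x_n$, I would select a split index $k^*$ dictated by the non-convexity of the endpoint pair, for instance the smallest $k$ with $d(x_0, x_k) > (\lambda(x_0, x_n) + \delta) \, d(x_0, x_n)$. Non-convexity forces the preceding vertex $x_{k^*-1}$ to lie far from $x_n$ (it escapes the lens through the second ball), and $x_{k^*}$ lies far from $x_0$ by construction; so both sub-chains have endpoints separated by a definite fraction of $d(x_0, x_n)$. Applying the inductive hypothesis to each sub-chain and combining with the single-scale estimate produces the desired lower bound on the $p$-energy of the full chain. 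Once $\tilde d \geq c \, d^p$ is secured, $\tilde d$ is the required metric.

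\textbf{Main obstacle.} The delicate point is that $\beta_p(\delta) < 1$ strictly for $p > 1$, so a naive recursion loses a multiplicative factor at every split and risks driving the bound to $0$ for long chains. Handling this requires balancing the splits so the recursion depth grows only logarithmically in $n$, and then choosing $p$ sufficiently close to $1$ (quantitatively in terms of $\delta$) so that the $O(\log n)$ compounding of $\beta_p$-losses remains bounded below by a positive constant independent of $n$. Achieving this uniform control is the real nontrivial content of the argument.
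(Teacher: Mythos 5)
The paper does not actually prove this proposition; it is imported wholesale from \cite{TyWu05}*{Theorem 1.5}, so the only comparison available is between your outline and a correct argument. Your overall skeleton --- pass to the Frink chain quantity $\tilde d(x,y)=\inf\sum d(x_i,x_{i+1})^p$, prove the reverse bound $\tilde d\geq c\,d^p$ by splitting chains at the first escape from a ball around $x_0$, and use the empty lens to force the previous vertex far from $x_n$ --- is indeed the standard route and is essentially sound. But there is a genuine gap in the two places where the quantitative content should live. First, your ``single-scale estimate'' $d(x,z)^p+d(z,y)^p\geq\beta_p(\delta)\,d(x,y)^p$ with $\beta_1=1$ carries no information from non-convexity: it already holds with $\beta_p=2^{1-p}$ for \emph{every} metric space by convexity and the triangle inequality, and since $\lambda$ may lie far from $1/2$, the empty lens need not exclude midpoints at all, so you cannot expect anything better pointwise. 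The actual gain from the hypothesis is additive and appears only at the chain level: the two sub-chains produced by your split have endpoint separations $D_1>(\lambda+\delta)d(x_0,x_n)$ and $D_2>(1-\lambda+\delta)d(x_0,x_n)$, whose \emph{sum} exceeds $(1+2\delta)\,d(x_0,x_n)$. ``Both separated by a definite fraction'' is not the point; the surplus $2\delta$ in the sum is.

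Second, your proposed resolution of the ``main obstacle'' would fail. You cannot balance the splits: the index $k^*$ is dictated by the geometry, so one sub-chain may contain all but one edge and the recursion depth can be linear in $n$ (indeed, if $k^*=n$ your first sub-chain $x_0,\dots,x_{k^*}$ is the whole chain and the induction on length does not even terminate). And even if the depth were $O(\log n)$, compounding a fixed loss $\beta_p<1$ over $O(\log n)$ levels gives $n^{-c}$, not a uniform constant; choosing $p$ closer to $1$ as $n$ grows destroys the uniformity you need. The correct repair is to make the induction \emph{lossless}: decompose the chain into three disjoint pieces $x_0,\dots,x_{k^*-1}$, the single edge $(x_{k^*-1},x_{k^*})$, and $x_{k^*},\dots,x_n$, with separations $a$, $m$, $b$ satisfying $a+m\geq D_1$ and $m+b\geq D_2$. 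If $m\geq\delta\,d(x_0,x_n)$ the middle edge alone gives $S_p\geq\delta^p d(x_0,x_n)^p$; otherwise $a+m+b>(1+\delta)d(x_0,x_n)$, and the inductive hypothesis $S_p\geq c(\cdot)^p$ on the two strictly shorter outer pieces yields $S_p\geq c(a^p+m^p+b^p)\geq c\,3^{1-p}(1+\delta)^p d(x_0,x_n)^p\geq c\,d(x_0,x_n)^p$ once $p>1$ is chosen with $3^{1-p}(1+\delta)^p\geq1$. The constant $c=\delta^p$ then survives every level unchanged, and there is no compounding to control. Without this (or an equivalent) mechanism, your induction does not close.
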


In this paper, we are interested in precise snowflakes, mostly because the assumptions we put on $X$ are strong enough that bi-Lipschitz equivalence is not needed. We will, however, use Proposition \ref{nonconvex} in an important way.

We should also remark that \cite{TyWu05} contains much more than Proposition \ref{nonconvex} in the way of characterizing snowflakes. For starters, the converse statement holds, as long as $(X,d)$ has a bi-Lipschitz embedding into a uniformly convex Banach space. We will not need these results here.

\section{Proof of Theorem \ref{main}}

Our goal in this section is to prove Theorem \ref{main}. Not surprisingly, in view of Characterization \ref{LD} and the discussion following it, we will proceed by demonstrating that any metric space satisfying the hypotheses of the theorem is either geodesic or can be de-snowflaked to a geodesic space. The resulting geodesic metric space is then isometric to a Carnot group which is $2$-point isometrically homogeneous. Standard arguments show that the only such Carnot groups are Euclidean spaces. Thus, the ``snowflake alternative" is the central step in our approach. This alternative hinges on the following notion, which makes sense in any metric space.

\begin{definition}
For a metric space $(X,d)$, we say that $z \in X$ is a \ti{between-point} if there are $x,y \in X \backslash \{z\}$ for which $d(x,y) = d(x,z) + d(z,y)$.
\end{definition}

Note that if $X$ is a metric space containing a non-constant geodesic segment, then every point on this segment (excluding the endpoints) is a between-point. Thus, geodesic metric spaces (with at least two points) have many between-points. On the other hand, if $(X,d)$ is the snowflake of some metric on $X$, then $d(x,y) < d(x,z) + d(z,y)$ for all $z \neq x,y$, so $(X,d)$ has no between-points. The alternative we will use for de-snowflaking subsists in the following proposition.

\begin{prop} \label{prop}
Let $(X,d)$ be a metric space with properties (\ref{main}.i), (\ref{main}.ii), and (\ref{main}.iii). If $X$ has a between-point, then $X$ is geodesic. If $X$ has no between points, then there is $p>1$ for which $(X,d^p)$ is a geodesic metric space. In either case, there is $p \geq 1$ for which $(X,d^p)$ is a geodesic metric space with properties (\ref{main}.i), (\ref{main}.ii), and (\ref{main}.iii).
\end{prop}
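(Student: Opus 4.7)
The plan is to split on whether $X$ contains a between-point.

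\medskip

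\noindent\textbf{Case 1: a between-point exists.} Fix a between-point $z_0$ for some pair $x_0,y_0$ and set $\lambda_0 := d(x_0,z_0)/d(x_0,y_0) \in (0,1)$. By (\ref{main}.vi), every pair $x \neq y$ in $X$ admits a $\lambda_0$-between-point. I would iteratively subdivide: starting from $\gamma(0)=x$, $\gamma(1)=y$, insert between each pair of consecutive already-constructed parameters $t_i<t_{i+1}$ the $\lambda_0$-between-point at parameter $t_i+\lambda_0(t_{i+1}-t_i)$. A routine induction then gives $d(\gamma(s),\gamma(t))=|s-t|\,d(x,y)$ on the constructed set. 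This set is the orbit of $\{0,1\}$ under the IFS $\{T_0(t)=\lambda_0 t,\ T_1(t)=\lambda_0+(1-\lambda_0)t\}$, whose attractor is $[0,1]$; hence it is dense. By completeness (Remark \ref{completeness}), $\gamma$ extends to an isometric embedding $[0,1]\to X$, a geodesic from $x$ to $y$, so $(X,d)$ is geodesic and the proposition holds with $p=1$.

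\medskip

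\noindent\textbf{Case 2: no between-point.} Fix $x_0,y_0$ with $d(x_0,y_0)=1$. If $L(x_0,y_0;1/2,\delta)$ were nonempty for every $\delta>0$, properness (Remark \ref{completeness}) would extract a midpoint between-point in the limit, a contradiction. So $L(x_0,y_0;1/2,\delta)=\emptyset$ for some $\delta>0$; by (\ref{main}.vi), $(X,d)$ is uniformly non-convex, and Proposition \ref{nonconvex} yields a metric $d_0$ on $X$, an exponent $\epsilon\in(0,1)$, and $C\geq 1$ with $C^{-1}d_0^\epsilon\leq d\leq C\,d_0^\epsilon$. To promote this bi-Lipschitz equivalence to an isometric snowflake identification, I would form the intrinsic metric
\[
\hat d(x,y) := \inf\Big\{\sum_{i=1}^{n} d(x_{i-1},x_i)^{1/\epsilon} \,:\, x=x_0,\ldots,x_n=y\Big\}.
\]
Routine checks give that $\hat d$ is a metric with $\hat d\leq d^{1/\epsilon}$, bi-Lipschitz to $d_0$, and proper. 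The key point is that any similarity $f$ from (\ref{main}.vi) that scales $d$ by a factor $\mu$ scales $\hat d$ by $\mu^{1/\epsilon}$, because chains transform covariantly under $f$. Hence the ratio $r(x,y):=\hat d(x,y)/d(x,y)^{1/\epsilon}$ is $f$-invariant, and the transitivity of (\ref{main}.vi) on pairs of distinct points forces $r$ to equal a positive constant $r_0$. Therefore $d^{1/\epsilon}=r_0^{-1}\hat d$ is a metric. Since $\hat d$ is a length metric on the proper space $X$ by construction, a standard midpoint extraction combined with completeness shows $(X,\hat d)$ --- and hence $(X,d^{1/\epsilon})$ up to positive scaling --- is geodesic. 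Properties (\ref{main}.i)--(\ref{main}.iii) clearly pass through $d\mapsto d^{1/\epsilon}$ (topology, isometries, and dilations are unchanged), so taking $p:=1/\epsilon>1$ finishes the case.

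\medskip

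\noindent\textbf{Main obstacle.} The delicate step is the promotion of Proposition \ref{nonconvex}'s bi-Lipschitz snowflake to the isometric identification $d^{1/\epsilon}=r_0^{-1}\hat d$. The rigidity is supplied entirely by (\ref{main}.vi): any residual multiplicative distortion between $\hat d$ and $d^{1/\epsilon}$ would be permuted transitively under the similarities, incompatible with $r(x,y)$ being a well-defined scalar function.
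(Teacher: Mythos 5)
Your Case 1 is essentially the paper's argument (iterated $\lambda_0$-subdivision via (\ref{main}.vi), density of the endpoint set, extension by completeness) and is fine. The problem is Case 2, and the gap is concrete: the exponent $\epsilon$ handed to you by Proposition \ref{nonconvex} need not be the correct de-snowflaking exponent, and your chain metric $\hat d$ does not repair this. Indeed, your own rigidity argument shows more than you notice: since $\hat d\leq d^{1/\epsilon}$ (one-step chain) we get $r_0\leq 1$, while $\hat d(x,y)=\inf\sum d(x_{i-1},x_i)^{1/\epsilon}=r_0^{-1}\inf\sum\hat d(x_{i-1},x_i)\geq r_0^{-1}\hat d(x,y)$ forces $r_0\geq 1$; hence $r_0=1$ and $\hat d=d^{1/\epsilon}$ identically. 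So the construction performs no ``intrinsic-ization'' whatsoever: a chain infimum with no mesh constraint is \emph{not} a length metric, and the claim that $(X,\hat d)$ is geodesic ``by construction'' is unfounded. A test case: $X=(\R,|\cdot|^{1/4})$ has no between-points, and $\epsilon=1/2$ is a perfectly valid output of Proposition \ref{nonconvex} (since $|\cdot|^{1/2}$ is already a metric bi-Lipschitz --- even isometric --- to $d^{1/\epsilon}$); your argument then terminates at $(X,d^2)=(\R,|\cdot|^{1/2})$, which is a metric space but is still a snowflake with no between-points and no nonconstant rectifiable curves, hence not geodesic. What you have actually proved is only that $d^{p}$ satisfies the triangle inequality for $p=1/\epsilon$, i.e.\ that the set of admissible exponents is nontrivial.

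The missing idea is the selection of the \emph{maximal} admissible exponent and a mechanism to certify that it yields between-points. The paper does this with the gauge $\phi(p)=\inf_x\bigl(d(a,x)^p+d(x,b)^p\bigr)$: it shows $\phi^{-1}(\{1\})$ is closed (continuity of $\phi$ on a proper space) and bounded (a Hausdorff-dimension bound, via doubling and Lemma \ref{HDim1}, gives $p\leq D$), takes the maximal $p$ with $\phi(p)=1$, and then argues by contradiction: if $(X,d^p)$ still had no between-point, Lemma \ref{TyWu} would give a chain lower bound at exponent $pq$ for some $q>1$, while maximality gives $\phi(pq)<1$ and Lemma \ref{smallchain} then produces chains from $a$ to $b$ with $\sum d(x_i,x_{i-1})^{pq}$ arbitrarily small --- contradicting $d(a,b)=1$. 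Some argument of this type (maximality plus the self-similar chain-shrinking construction of Lemma \ref{smallchain}) is indispensable; without it your proof stops one step short of geodesicity.
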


To prove this proposition, we will need several lemmas. All of them address the case in which $X$ has no between-points.

\begin{lemma} \label{TyWu}
Let $(X,d)$ be a metric space with properties (\ref{main}.i), (\ref{main}.ii), and (\ref{main}.iii). If $X$ does not have a between-point, then there is $p > 1$ and $L>0$ for which
$$d(x_0,x_n)^p \leq L\sum_{i=1}^n d(x_i,x_{i-1})^p$$
for all finite chains of points $x_0,\ldots,x_n$ in $X$.
\end{lemma}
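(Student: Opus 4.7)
The plan is to show that a metric space satisfying (\ref{main}.i)--(\ref{main}.iii) and having no between-point is uniformly non-convex in the sense of Tyson and Wu, and then to apply Proposition \ref{nonconvex} to obtain a genuine metric $\rho$ on $X$ and an exponent $\e \in (0,1)$ for which $d$ and $\rho^{\e}$ are bi-Lipschitz equivalent. The desired chain inequality will then follow by converting both sides to $\rho$ and invoking its ordinary triangle inequality.

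The first step will be to quantify how far a ``midpoint'' between two prescribed points is forced to lie from them. Fixing $x,y \in X$ with $d(x,y)=1$, I would set
$$m := \inf_{z \in X} \max\bigl(d(x,z),\, d(z,y)\bigr).$$
Property (\ref{main}.ii) together with the scalings from (\ref{main}.iii) shows that $m$ depends only on the distance between the two points and not on the points themselves, and the triangle inequality immediately gives $m \geq 1/2$. The crucial observation, and the main obstacle, is to promote this to $m > 1/2$. Here I would invoke Remark \ref{completeness}: local compactness plus transitivity makes $X$ proper, so a minimizing sequence $z_n$ for $m$ stays in a fixed closed ball about $x$ and admits a subsequential limit $z^{*}$. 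Continuity of $d$ forces $\max(d(x,z^{*}), d(z^{*},y)) = m$, and if $m$ were equal to $1/2$ the triangle inequality would pin down $d(x,z^{*}) = d(z^{*},y) = 1/2$, exhibiting $z^{*}$ as a between-point of $x$ and $y$ distinct from each, contrary to hypothesis.

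With $\delta := (m-1/2)/2 > 0$ (note $\delta < 1/2$ since $m \leq 1$, as seen by taking $z=x$), any point $z$ in the lens $L(x',y';1/2,\delta)$ for a unit-distance pair $(x',y')$ would satisfy $\max(d(x',z),d(z,y')) \leq 1/2+\delta < m$, which is impossible. By $2$-point homogeneity and scaling, this obstruction transfers to every pair of distinct points in $X$, so $(X,d)$ is uniformly non-convex. Proposition \ref{nonconvex} then yields a metric $\rho$ on $X$, an exponent $\e \in (0,1)$, and a constant $C \geq 1$ with
$$C^{-1}\,\rho(u,v)^{\e} \leq d(u,v) \leq C\,\rho(u,v)^{\e} \quad \text{for all } u,v \in X.$$
Setting $p := 1/\e$ and $L := C^{2p}$, a direct computation using both inequalities above and the triangle inequality for $\rho$ would then give
$$d(x_0,x_n)^{p} \leq C^{p}\rho(x_0,x_n) \leq C^{p}\sum_{i=1}^{n}\rho(x_{i-1},x_i) \leq L\sum_{i=1}^{n} d(x_{i-1},x_i)^{p},$$
as required.
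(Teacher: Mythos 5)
Your proposal is correct and takes essentially the same route as the paper: both establish uniform non-convexity by using properness (Remark \ref{completeness}) to extract a midpoint of a unit-distance pair --- which would be a forbidden between-point --- if the lens $L(a,b;1/2,\delta)$ were non-empty for every $\delta$, then transfer the obstruction to all pairs via (\ref{main}.vi) and invoke Proposition \ref{nonconvex}. Your gauge $m=\inf_{z}\max\lp d(x,z),d(z,y)\rp$ and the choice $\delta=(m-1/2)/2$ are just a slight repackaging of the paper's accumulation-point argument, and your final conversion of the bi-Lipschitz snowflake equivalence into the chain inequality matches the paper's concluding step.
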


\begin{proof}
Without loss of generality, we may assume that $X$ contains at least two points. For $0 < \delta <1/2$, let
$$L(x,y;\delta) = \cl{B} \lp x, \lp \delta + \tfrac{1}{2} \rp d(x,y) \rp \cap \cl{B} \lp y, \lp \delta + \tfrac{1}{2} \rp d(x,y) \rp$$
be the lens-shaped set discussed in the previous section, corresponding to the value $\lambda = 1/2$. We claim that there is a uniform value $\delta$ such that $L(x,y;\delta) = \emptyset$ for each pair of distinct points $x,y \in X$. 

To see this, first choose points $a,b \in X$ with $d(a,b)=1$; such a pair exists by (\ref{main}.iii) and the assumption that $X$ has at least two points. Note that if $z_{\delta} \in L(a,b;\delta)$, then $d(a,z_{\delta}) \leq \frac{1}{2} + \delta$ and $d(z_{\delta},b) \leq \frac{1}{2} + \delta$, so 
$$d(a,z_{\delta}) + \rho(z_{\delta},b) \leq 1 + 2\delta = d(a,b) + 2\delta.$$ 
If such $z_{\delta}$ were to exist for each $0 < \delta <1/2$, then, as closed balls in $X$ are compact (cf. Remark \ref{completeness}), we could find an accumulation point $z \in X$ for which $\rho(a,z), \rho(z,b) \leq \frac{1}{2}$ and 
$$\rho(a,z) + \rho(z,b) \leq 1.$$
Clearly, such $z$ is a midpoint for $a$ and $b$, contradicting the assumption that $X$ has no between-points. Thus, there is $0<\delta<1/2$ for which $L(a,b;\delta) = \emptyset$.

We now use property (\ref{main}.vi) to show that $L(x,y;\delta) = \emptyset$ whenever $x,y \in X$ are distinct. Indeed, there is a homeomorphism $f \colon X \rightarrow X$ with $f(x)=a$ and $f(y)=b$, which scales distances by $d(a,b)/d(x,y)$. It is easy to see that any $z \in L(x,y;\delta)$ would give $f(z) \in L(a,b;\delta)$, contrary to the choice of $\delta$. This establishes our claim.

Thus, $(X,d)$ is uniformly non-convex, in the language of \cite{TyWu05}. Invoking Proposition \ref{nonconvex}, we can conclude that $(X,d)$ is bi-Lipschitz equivalent to a snowflake. In other words, there is $p > 1$ for which $d^p$ is bi-Lipschitz equivalent to a metric on $X$. This implies that there is $L > 0$ such that
$$d(x_0,x_n)^p \leq L\sum_{i=1}^n d(x_i,x_{i-1})^p$$
for all finite chains of points $x_0,\ldots,x_n$ in $X$.
\end{proof}

For the following lemma, we need to introduce some notation, namely, a type of gauge function. Let us assume that $X$ contains two points $a,b \in X$ with $d(a,b)=1$. Define the function
$$\phi(p) = \inf_{x \in X} \lp d(a,x)^p + d(x,b)^p \rp, \hspace{0.5cm} p \geq 1,$$
which is upper semi-continuous in $p$. Indeed, for each $\e > 0$, there is $x \in X$ for which
$$ \begin{aligned}
\phi(p) &\geq d(a,x)^p + d(x,b)^p - \e \\
&= \limsup_{n \rightarrow \infty} \lp d(a,x)^{p_n} + d(x,b)^{p_n} \rp -\e \\
&\geq \limsup_{n \rightarrow \infty} \phi(p_n) - \e,
\end{aligned} $$
so that $\phi(p) \geq \limsup_{n \rightarrow \infty} \phi(p_n)$.

\begin{remark} \label{cts}
When $X$ is a proper metric space (i.e., closed balls are compact), it is not difficult to see that $\phi$ is, in fact, continuous. To verify this, first observe that the infimum defining $\phi(p)$ is attained for each $p$. Now, if $(p_n)$ is a sequence converging to $p$, let $x_n \in X$ be points at which $\phi(p_n)$ is attained. Note that these points all lie in a fixed ball centered at $a$, so after passing to a subsequence, we may assume that $\lim_{n \rightarrow \infty} x_n = x$. This implies that the quantity $\phi(p_n) = d(a,x_n)^{p_n} + d(x_n,b)^{p_n}$ converges to $d(a,x)^p + d(x,b)^p$, which is an upper bound for $\phi(p)$. We therefore obtain $\phi(p) \leq \liminf_n \phi(p_n)$, which gives lower semi-continuity of $\phi$.
\end{remark}

Let us also observe that $\phi(p) \leq 1$ for all $p$, since one can take $x=a$ in the infimum. Moreover, $\phi(p)=1$ if and only if the distance function $d^p$ on $X$ satisfies the triangle inequality for points $a$, $b$, and $x$, where $x \in X$ is arbitrary. The next lemma considers the case that $\phi(p) < 1$.

\begin{lemma} \label{smallchain}
Suppose that $X$ satisfies (\ref{main}.i), (\ref{main}.ii), and (\ref{main}.iii). If $\phi(p) < 1$, then there are chains $a=x_0,x_1,\ldots,x_n=b$ in $X$ for which
$$\sum_{i=1}^n d(x_i,x_{i-1})^p$$
is arbitrarily small.
\end{lemma}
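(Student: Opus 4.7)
The strategy is to iteratively subdivide the trivial chain $a, b$, using property (\ref{main}.vi) to transfer the ``shortcut'' guaranteed by the hypothesis $\phi(p) < 1$ to every edge of the chain. The key observation is that if we can always insert a point between two given endpoints so as to shrink the $p$-sum by a uniform factor $c < 1$, then after $k$ iterations the total $p$-sum is at most $c^k$, which tends to zero.

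Concretely, fix any $c$ with $\phi(p) < c < 1$. By the definition of $\phi$ as an infimum, there exists a point $z \in X$ with $d(a,z)^p + d(z,b)^p < c$. I claim that for any two distinct points $y, y' \in X$, there exists $w \in X$ with
$$d(y,w)^p + d(w,y')^p < c \cdot d(y,y')^p.$$
To establish this, apply property (\ref{main}.vi): since $a \neq b$ and $y \neq y'$, there is a homeomorphism $f \colon X \to X$ with $f(a) = y$ and $f(b) = y'$ that scales distances by $\lambda = d(y,y')/d(a,b) = d(y,y')$. Setting $w := f(z)$, we compute
$$d(y,w)^p + d(w,y')^p = \lambda^p \bigl( d(a,z)^p + d(z,b)^p \bigr) < c \cdot d(y,y')^p,$$
which is the desired refinement. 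Note in passing that $w \notin \{y,y'\}$, since otherwise one of the two terms would vanish and the other would equal $d(y,y')^p$, contradicting the strict inequality with $c < 1$.

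Now I would iterate. Start with the trivial chain $x_0^{(0)} = a$, $x_1^{(0)} = b$, whose $p$-sum is $d(a,b)^p = 1$. At step $k$, given a chain $a = x_0^{(k)}, \ldots, x_{n_k}^{(k)} = b$ (all consecutive points distinct, by the observation above), apply the refinement claim to each consecutive pair, inserting the new points to form the next chain. If $S_k$ denotes the previous $p$-sum, the refinement gives
$$S_{k+1} \leq \sum_{i=1}^{n_k} c \cdot d(x_i^{(k)}, x_{i-1}^{(k)})^p = c \cdot S_k,$$
so by induction $S_k \leq c^k$. Since $c < 1$, $S_k \to 0$, and the lemma follows.

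I do not anticipate a significant obstacle: the argument is a direct self-similar iteration driven by property (\ref{main}.vi). The only subtlety worth flagging is that the \emph{strict} inequality $\phi(p) < 1$ is essential, since it is precisely what lets us choose $c < 1$ and drive the $p$-sum geometrically to zero; had we only $\phi(p) = 1$, the analogous iteration would bound the $p$-sum by $1$ but not contract it.
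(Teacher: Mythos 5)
Your proof is correct and follows essentially the same route as the paper: both arguments use property (\ref{main}.vi) to transplant the shortcut point $z$ between each consecutive pair of the current chain, observe that consecutive points stay distinct, and conclude that the $p$-sum contracts by a uniform factor less than $1$ at each doubling step. The only cosmetic difference is that you bound the contraction factor by an auxiliary constant $c$ with $\phi(p)<c<1$, whereas the paper tracks the exact factor $d(a,z)^p+d(z,b)^p$.
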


\begin{proof}
As $\phi(p) < 1$, we know that there is $z \in X$ for which
$$d(a,z)^p + d(z,b)^p < 1.$$
Consider the following inductive construction of finite chains of points between $a$ and $b$. The first chain is $x_0=a$, $x_1=z$, and $x_2=b$. Given a chain
$$a=x_0,x_1,\ldots, x_{2^k}=b,$$
for which consecutive points are distinct, we form a new chain 
$$a=y_0,y_1,\ldots,y_{2^{k+1}}=b$$
by setting 
$$y_i = 
\begin{cases}
x_{i/2} &\text{if } i \text{ is even} \\
f_i(z) &\text{if } i \text{ is odd}.
\end{cases}$$
Here, $f_i$ is a homeomorphism given by (\ref{main}.vi) for the pairs $(a,b)$ and $(y_{i-1},y_{i+1})$, with $f_i(a) = y_{i-1}$ and $f_i(b) = y_{i+1}$. It is important to note that $y_{i-1} \neq y_{i+1}$, as consecutive points in the original chain were distinct. Moreover, the construction guarantees that consecutive points in the new chain are also distinct. 

Now observe that for even $i$, we have
$$\begin{aligned}
d(y_i,y_{i+1})^p &+ d(y_{i+1},y_{i+2})^p = d(f_{i+1}(a),f_{i+1}(z))^p + d(f_{i+1}(z),f_{i+1}(b))^p \\
& = \lp \frac{d(y_{i},y_{i+2})}{d(a,b)} \cdot d(a,z)\rp^p + \lp \frac{d(y_{i},y_{i+2})}{d(a,b)} \cdot d(z,b)\rp^p \\
&= \lp d(a,z)^p + d(z,b)^p \rp \cdot d(x_{i/2},x_{i/2+1})^p.
\end{aligned}$$
As a result, we see that
$$\begin{aligned}
\sum_{i=1}^{2^{k+1}} d(y_i,y_{i-1})^p &= \sum_{i \text{ even}} \lp d(y_{i-1},y_{i-2})^p + d(y_i,y_{i-1})^p \rp \\
&= \lp d(a,z)^p + d(z,b)^p \rp \sum_{i=1}^{2^k} d(x_i,x_{i-1})^p.
\end{aligned}$$
By induction, we then obtain
$$\sum_{i=1}^{2^{k+1}} d(y_i,y_{i-1})^p =  \lp d(a,z)^p + d(z,b)^p \rp^{k+1}.$$
As $d(a,z)^p + d(z,b)^p < 1$, this can be made arbitrarily small by taking large $k$.
\end{proof}

The following lemma is a baby version of the statement in Proposition \ref{prop} regarding metric spaces with no between points. It will not be difficult to upgrade it to the full version.

\begin{lemma} \label{existsp}
Let $(X,d)$ be a metric space, having at least two points, that satisfies (\ref{main}.i), (\ref{main}.ii), and (\ref{main}.iii). If $X$ has no between-points, then there is $p>1$ for which $(X,d^p)$ is a metric space with at least one between point.
\end{lemma}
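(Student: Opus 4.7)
The plan is to analyze the gauge $\phi(p) = \inf_{x \in X}(d(a,x)^p + d(x,b)^p)$ introduced before Lemma \ref{smallchain}, for a fixed pair $a, b \in X$ with $d(a,b) = 1$. Properness (Remark \ref{completeness}) ensures the infimum is attained, and by Remark \ref{cts}, $\phi$ is continuous; moreover $\phi(1) = 1$, $\phi \leq 1$, and for $x$ in the compact region $\bar B(a,1) \cap \bar B(b,1)$ (where the only nontrivial candidates for the infimum live) the function $p \mapsto d(a,x)^p + d(x,b)^p$ is nonincreasing, so $\phi$ itself is nonincreasing. By (\ref{main}.vi), $\phi(p) = 1$ is equivalent to $(X, d^p)$ being a metric space, and the existence of a between-point for $d^p$ is equivalent to this infimum being attained at some $z \neq a, b$.

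Combining Lemma \ref{TyWu} with Lemma \ref{smallchain}, I would first show $\phi(p_0) = 1$ for Lemma \ref{TyWu}'s exponent $p_0 > 1$: if instead $\phi(p_0) < 1$, Lemma \ref{smallchain} produces chains from $a$ to $b$ whose $d^{p_0}$-sum tends to $0$, contradicting the lower bound $1/L$ from Lemma \ref{TyWu}'s chain inequality. Setting $p^* := \sup\{p \geq 1 : \phi(p) = 1\}$ one then has $p^* \geq p_0 > 1$, and continuity of $\phi$ forces $\phi(p^*) = 1$ provided $p^*$ is finite. I would claim $p^*$ is finite: otherwise $\phi(p) = 1$ for all $p$ would force $d(a,x)^p + d(x,b)^p \geq 1$ uniformly in $p$, and letting $p \to \infty$ would rule out any $x$ with both distances in $(0,1)$. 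By (\ref{main}.vi) this forces $d$ to be an ultrametric, and this structure must be shown to be inconsistent with the combination of $2$-point homogeneity and dilations of arbitrary factor.

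With $p^*$ finite, the candidate exponent is $p^*$, and it remains to exhibit a minimizer of $g_{p^*}(\cdot) = d(a,\cdot)^{p^*} + d(\cdot, b)^{p^*}$ distinct from $a, b$. Properness gives a minimizer $z^*$; one would then rule out $z^* \in \{a, b\}$ by picking $p_n \downarrow p^*$ and minimizers $z_n$ of $g_{p_n}$ (so $g_{p_n}(z_n) = \phi(p_n) < 1$ and $z_n \neq a, b$). The $z_n$ lie in the compact set $\bar B(a,1) \cap \bar B(b,1)$, so after extracting $z_n \to z^*$, and joint continuity of $(p, z) \mapsto d(a,z)^p + d(z,b)^p$ together with continuity of $\phi$ forces $g_{p^*}(z^*) = 1$. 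If $z^* = a$ (the case $z^* = b$ is symmetric) and $r_n := d(a, z_n) \to 0$, $t_n := d(z_n,b)\to 1$, I would use the dilation $f_{1/r_n}$ fixing $a$ together with (\ref{main}.ii) to transport $(a, z_n, b)$ to a triple $(a, b, c_n)$ with $d(a, c_n) = 1/r_n \to \infty$; a careful quantitative analysis of the resulting configurations, leveraging the uniform-non-convexity estimate underlying Lemma \ref{TyWu}, should then contradict $r_n^{p_n} + t_n^{p_n} < 1$ for $n$ large.

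The main obstacle lies in the two places where the structural rigidity of the hypotheses enters nontrivially: ruling out the ultrametric case $p^* = \infty$, and showing that the minimizer at $p^*$ cannot concentrate at an endpoint. Both arguments rely on quantitative consequences of uniform non-convexity from Lemma \ref{TyWu}, combined with the precise rescaling afforded by (\ref{main}.ii) and (\ref{main}.iii).
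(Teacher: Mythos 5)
Your setup (the gauge $\phi$, its continuity via Remark \ref{cts}, the observation that $\phi(p)=1$ makes $d^p$ a metric, and the use of Lemma \ref{TyWu} together with Lemma \ref{smallchain} to force $\phi(p_0)=1$ for the Tyson--Wu exponent $p_0>1$ of $(X,d)$) matches the paper. But the proof breaks down at the two places you yourself flag, and neither is a routine verification.

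The more serious gap is the final step. Having set $p^*=\sup\{p:\phi(p)=1\}$, you try to produce a between-point for $d^{p^*}$ by extracting a limit $z^*$ of minimizers $z_n$ of $g_{p_n}$ with $p_n\downarrow p^*$, and you concede that the degenerate case $z^*\in\{a,b\}$ requires ``a careful quantitative analysis'' that ``should'' yield a contradiction. That analysis is the whole difficulty: nothing you have written rules out minimizers concentrating at an endpoint, and the estimates you do have ($r_n\to 0$, $t_n\geq 1-r_n$, $r_n^{p_n}+t_n^{p_n}<1$) are mutually consistent, so no contradiction is in sight. The paper avoids analyzing minimizers entirely: it assumes $(X,d^{p^*})$ has no between-point and applies Lemma \ref{TyWu} \emph{to the space $(X,d^{p^*})$ itself} (which satisfies (\ref{main}.i)--(\ref{main}.iii)), obtaining $q>1$ and $L$ with a chain inequality at exponent $p^*q$; maximality of $p^*$ gives $\phi(p^*q)<1$, so Lemma \ref{smallchain} produces chains from $a$ to $b$ whose $d^{p^*q}$-sums are arbitrarily small, contradicting $d(a,b)^{p^*q}=1\leq L\sum$. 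You already use exactly this combination once, at the base level, to get $p^*\geq p_0>1$; the missing idea is to iterate it at level $p^*$, which finishes the proof with no case analysis on where the infimum is attained.

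The second gap is the finiteness of $p^*$. Your reduction to the ultrametric inequality is correct (using that $\{p:\phi(p)=1\}$ is an interval and letting $p\to\infty$), but you leave unproved the claim that an ultrametric is incompatible with (\ref{main}.i)--(\ref{main}.iii). This is fillable---for instance, by Lemma \ref{HDim1} every sphere $S(a,r)$, $0<r\leq 1$, is nonempty, and in an ultrametric space each such sphere is open, so a compact closed ball would be partitioned into uncountably many disjoint nonempty open sets, contradicting the Lindel\"of property---but as written it is an assertion, not an argument. The paper's route is different and self-contained: $(X,d)$ is doubling, hence has finite Hausdorff dimension $D$; if $\phi(p)=1$ then $(X,d^p)$ is a metric space satisfying the hypotheses of Lemma \ref{HDim1}, so its Hausdorff dimension $D/p$ is at least $1$, giving $p\leq D$.
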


\begin{proof}
As $X$ has at least two points and admits dilations, there exist $a,b \in X$ with $d(a,b)=1$. Let $\phi(p)$ be the corresponding gauge function, defined for $p \geq 1$. We remarked earlier that for general metric spaces, $\phi(p) \leq 1$, with equality if and only if $d^p$ satisfies the triangle inequality for the points $a$, $b$, and $x$, with $x \in X$ arbitrary. Using the $2$-point homogeneity in (\ref{main}.vi), we can strengthen this statement: if $\phi(p)=1$, then $d^p$ is a metric on $X$. To see this, fix $y,z \in X$ distinct and take a homeomorphism $f \colon X \rightarrow X$ such that $f(a)=y$, $f(b)=z$, and 
$$d(f(u),f(v)) = \frac{d(y,z)}{d(a,b)} d(u,v) = d(y,z)d(u,v)$$
for all $u,v \in X$. Then, for any $x \in X$, we have
$$\begin{aligned}
d(y,z)^p &= d(y,z)^p \phi(p) \\
&\leq d(y,z)^p \lp d(a,x)^p + d(x,b)^p \rp \\
&= \lp d(y,z)d(a,x)\rp^p + \lp d(y,z)d(x,b) \rp^p \\
&= d(f(a),f(x))^p + d(f(x),f(b))^p \\
&= d(y,f(x))^p + d(f(x),z)^p.
\end{aligned}$$
As $f$ is surjective, this implies the triangle inequality for $(X,d^p)$. Verifying the other metric properties is, of course, straightforward.

Our next claim is that the set $\phi^{-1}(\{1\})$ is non-empty, closed, and bounded in $[1,\infty)$. Non-emptiness comes from the obvious fact that $\phi(1)=1$. The continuity of $\phi$, which is guaranteed by Remark \ref{cts} and the fact that $X$ is proper (cf. Remark \ref{completeness}), implies that this set is closed. To show boundedness, we argue as follows. Properties (\ref{main}.i), (\ref{main}.ii), and (\ref{main}.iii) guarantee that $(X,d)$ is metrically doubling: every ball of radius $r>0$ can be covered by at most $C$ balls of radius $r/2$, where $C$ is a uniform constant. In particular, this means that $(X,d)$ has finite Hausdorff dimension, say equal to $D$. If $\phi(p)=1$, then $(X,d^p)$ is a metric space with Hausdorff dimension $D/p$. Moreover, it satisfies the requirements of Lemma \ref{HDim1}, so $D/p \geq 1$; equivalently, $p \leq D$.

Let $p\geq 1$ be the maximal value for which $\phi(p)=1$. We now claim that the metric space $(X,d^p)$ has a between-point. First observe that $(X,d^p)$ satisfies properties (\ref{main}.i), (\ref{main}.ii), and (\ref{main}.iii). If it does not have a between-point, then Lemma \ref{TyWu} guarantees that there is $q > 1$ and $L > 0$ such that 
$$d(x_0,x_n)^{pq} \leq L\sum_{i=1}^n d(x_i,x_{i-1})^{pq}$$
for all finite chains of points $x_0,\ldots,x_n$ in $X$. On the other hand, maximality of $p$ ensures that $\phi(pq) <1$. By Lemma \ref{smallchain}, there are chains $a=x_0,x_1,\ldots,x_n=b$ for which
$$\sum_{i=1}^n d(x_i,x_{i-1})^{pq}$$
is arbitrarily small. Together, these facts contradict $d(a,b)^{pq} = 1$. Thus, $(X,d^p)$ must have at least one between-point, as desired.
\end{proof}

With these lemmas, we are ready to prove Proposition \ref{prop}. There are essentially two statements to verify, corresponding to whether or not $(X,d)$ has between-points. We treat the case of existence of between-points first, as we will use this result to establish the second case.

\begin{proof}[\textbf{Proof of Proposition \ref{prop}}]
Suppose that $(X,d)$ has a between point $z \in X$, so that there are $x,y \in X \backslash \{z\}$ with $d(x,y) = d(x,z) + d(z,y)$. We want to show that $X$ is geodesic, but by property (\ref{main}.vi), it suffices to show that there is a geodesic segment from $x$ to $y$. We may also assume, without loss of generality, that $d(x,y)=1$. Our goal, then, is to construct an isometric embedding $\gamma \colon [0,1] \rightarrow X$ with $\gamma(0)=x$ and $\gamma(1)=y$.

Let $\delta = d(x,z)$ so that $0 < \delta < 1$. For each $n \in \N$, we decompose $[0,1]$ into non-overlapping closed intervals $I_{a_1, \ldots, a_n}$, indexed by $(a_1,\ldots,a_n) \in \{0,1\}^n$ in the following way:
\begin{enumerate}[(i)]
\item $I_0 = [0,\delta]$ and $I_1 = [\delta, 1]$;
\item $I_{a_1, \ldots, a_n,0} \subset I_{a_1, \ldots, a_n}$ is of length $\delta \cdot |I_{a_1, \ldots, a_n}|$ and shares a left endpoint with $I_{a_1, \ldots, a_n}$;
\item $I_{a_1, \ldots, a_n,1} \subset I_{a_1, \ldots, a_n}$ is of length $(1-\delta) \cdot |I_{a_1, \ldots, a_n}|$ and shares a right endpoint with $I_{a_1, \ldots, a_n}$.
\end{enumerate}
Let $E_n$ be the set of endpoints of the intervals $I_{a_1,\ldots,a_n}$ as $(a_1,\ldots,a_n)$ ranges through $\{0,1\}^n$, and note that $\cup_n E_n$ is dense in $[0,1]$. Define $\gamma \colon \cup_n E_n \rightarrow X$ inductively as follows:
\begin{enumerate}[(i)]
\item $\gamma(0) = x$, $\gamma(\delta) = z$, and $\gamma(1)=y$;
\item If $s<t<u$ with $s,u$ the endpoints of $I_{a_1, \ldots, a_n}$ and $t$ the right endpoint of $I_{a_1, \ldots, a_n,0}$, then $\gamma(t)$ is a point for which
$$d(\gamma(s),\gamma(t)) = \delta \cdot d(\gamma(s),\gamma(u)) \hspace{0.2cm} \text{and} \hspace{0.2cm} d(\gamma(t),\gamma(u)) = (1-\delta) \cdot d(\gamma(s),\gamma(u)).$$
\end{enumerate}
To see that this definition makes sense, observe that property (\ref{main}.vi) implies that for every two points $x',y' \in X$, there is a point $z' \in X$ with 
$$d(x',z') = \delta \cdot d(x',y') \hspace{0.3cm} \text{ and } \hspace{0.3cm} d(z',y') = (1-\delta)\cdot d(x',y').$$
It is not difficult to verify that $\gamma$, thus defined, is an isometry on $\cup_n E_n$. As $(X,d)$ is complete (cf. Remark \ref{completeness}), we can therefore extend $\gamma$ to an isometry on $[0,1]$. This proves the first part of the proposition.

For the second part, assume that $(X,d)$ has no between-point. We may assume that $X$ has at least two points, for otherwise the desired conclusion is vacuously true. Lemma \ref{existsp} guarantees that there is $p > 1$ for which $(X,d^p)$ is a metric space with at least one between-point. Notice, of course, that $(X,d^p)$ still satisfies properties (\ref{main}.i), (\ref{main}.ii), and (\ref{main}.iii). By the first part of the proposition, then, we can conclude that $(X,d^p)$ is geodesic.

The final statement in the proposition follows immediately from the previous two parts.
\end{proof}

\begin{proof}[\textbf{Proof of Theorem \ref{main}}]
In light of Proposition \ref{prop}, it suffices to prove the following statement. Any geodesic metric space that satisfies properties (\ref{main}.i), (\ref{main}.ii), and (\ref{main}.iii) is isometric to $\E^n$ for some $n \in \N \cup \{0\}$. The case when $X$ has one point is clear, so we may assume that $X$ has at least two points.

For such $(X,d)$, Characterization \ref{LD} implies that $X$ is isometric to a sub-Finsler Carnot group. Moreover, $X$ has the stronger property that $\Isom(X)$ is transitive on pairs of points at distance $1$. In particular, the group $\Isom(X)_e$ of isometries fixing the identity element acts transitively on the unit sphere $S(e,1)$.

The isometries of a sub-Finsler Carnot group are known to be affine \cite{LDO12}. In particular, those that fix the identity element are group isomorphisms that preserve the horizontal stratum $V_1$, and their horizontal differentials are isometries. Thus, every point of the form $\exp(v)$ with $v \in V_1$ can be mapped, under an isometry of $X$, only to points of the form $\exp(w)$ with $w \in V_1$ and $|v|=|w|$. We deduce, then, that $S(e,1)=\exp(V_1)\cap S(e,1)$, which is possible only if the Carnot group is $\exp(V_1)$. In other words, the Carnot group has only one stratum, which means that it is abelian. The only abelian Carnot groups are $\R^n$, $n \in \N$. Hence, $X$ is isometric to some (finite-dimensional) normed space $(\R^n,|\cdot|)$.

Of course, $(\R^n,|\cdot|)$ also has the stronger property that its group of isometries fixing 0 is linear and acts transitively on the $|\cdot|$-unit sphere. It is a classical fact that such norms necessarily come from a scalar product (cf. \cite[Exercise 1.2.24]{BBI01}). Thus, $X$ is isometric to a Euclidean space.
\end{proof}

\section{An example}\label{ex:Cantor}

We 
recall here an example of a 
metric space
that
  satisfies (\ref{LD}.i), (\ref{LD}.iii), and (\ref{LD}.iv) 
 but fails to be connected 
 and locally connected.
 We thank
Gareth Speight 
 for reminding us of this example.
 
  Let $X$ be the set of all double-sided sequences
   $(x_{n})_{n\in\Z}$ 
  such that $x_{n}\in \{0,1\}$ and 
  that are eventually $0$, i.e.,
  there is an integer $N$ such that $x_{n}=0$ for all $n>N$.
Define a metric on $X$ by
$$d((x_{n}),(y_{n})):=2^{\max\{n : x_{n} \neq y_{n}\}}.$$
 It is straightforward to check that the metric space $(X,d)$ is 
 locally compact, isometrically homogeneous, and self similar (a dilation is provided by the shift map).	

Note, however, that $(X,d)$ does not admit dilations of {\em every} factor, in the sense of property (\ref{main}.iii). Indeed, otherwise Lemma \ref{HDim1} would 
 imply that the topological dimension is at least $1$, a contradiction.

\begin{bibdiv}
\begin{biblist}

\bib{Bere}{article}{
   author={Berestovski{\u\i}, V.},
   title={Similarly homogeneous locally complete spaces with an intrinsic
   metric},
   language={Russian},
   journal={Izv. Vyssh. Uchebn. Zaved. Mat.},
   date={2004},
   number={11},
   pages={3--22},
   issn={0021-3446},
   translation={
      journal={Russian Math. (Iz. VUZ)},
      volume={48},
      date={2004},
      number={11},
      pages={1--19 (2005)},
   },
}

\bib{BBI01}{book}{
   author={Burago, D.},
   author={Burago, Y.},
   author={Ivanov, S.},
   title={A course in metric geometry},
   series={Graduate Studies in Mathematics},
   volume={33},
   publisher={American Mathematical Society, Providence, RI},
   date={2001},
   pages={xiv+415},
}

\bib{Con03}{article}{
   author={Connell, C.},
   title={Minimal Lyapunov exponents, quasiconformal structures, and
   rigidity of non-positively curved manifolds},
   journal={Ergodic Theory Dynam. Systems},
   volume={23},
   date={2003},
   number={2},
   pages={429--446},
   issn={0143-3857},
}

\bib{Fed69}{book}{
   author={Federer, H.},
   title={Geometric measure theory},
   series={Die Grundlehren der mathematischen Wissenschaften, Band 153},
   publisher={Springer-Verlag New York Inc., New York},
   date={1969},
   pages={xiv+676},
}

\bib{KB02}{article}{
   author={Kapovich, I.},
   author={Benakli, N.},
   title={Boundaries of hyperbolic groups},
   conference={
      title={Combinatorial and geometric group theory},
   },
   book={
      series={Contemp. Math.},
      volume={296},
      publisher={Amer. Math. Soc., Providence, RI},
   },
   date={2002},
   pages={39--93},
}

\bib{Kin14}{article}{
   author={Kinneberg, K.},
   title={Rigidity for quasi-M\"obius actions on fractal metric spaces},
   journal={to appear, J. Diff. Geom.}
   date={2014}
   eprint={arXiv:1308.0639v1},
}

\bib{LD13}{article}{
   author={Le Donne, E.},
   title={A metric characterization of Carnot groups},
   journal={preprint}
   date={2013}
   eprint={arXiv:1304.7493v1},
}

\bib{LDO12}{article}{
   author={Le Donne, E.},
   author={Ottazzi, A.}
   title={Isometries of Carnot groups and sub-Finsler homogeneous manifolds},
   journal={preprint}
   date={2012}
   eprint={arXiv:1210.5097v1},
}

\bib{Mat95}{book}{
   author={Mattila, P.},
   title={Geometry of sets and measures in Euclidean spaces},
   series={Cambridge Studies in Advanced Mathematics},
   volume={44},
   publisher={Cambridge University Press, Cambridge},
   date={1995},
   pages={xii+343},
}

\bib{Nic89}{book}{
   author={Nicholls, P.},
   title={The ergodic theory of discrete groups},
   series={London Mathematical Society Lecture Note Series},
   volume={143},
   publisher={Cambridge University Press, Cambridge},
   date={1989},
   pages={xii+221},
}

\bib{TyWu05}{article}{
   author={Tyson, J.},
   author={Wu, J.-M.},
   title={Characterizations of snowflake metric spaces},
   journal={Ann. Acad. Sci. Fenn. Math.},
   volume={30},
   date={2005},
   number={2},
   pages={313--336},
}

\end{biblist}
\end{bibdiv}

\end{document}